\renewcommand{\ge}{\geqslant}
\newtheorem{teo}{Theorem}[section]
\newtheorem{lem}[teo]{Lemma}
\theoremstyle{definition}
\newtheorem{dfn}[teo]{Definition}
\newtheorem{rk}[teo]{Remark}
\newtheorem{ex}[teo]{Example}
\def\<{\langle}
\def\>{\rangle}
\def\ss{\subset}
\def\g{\gamma}
\def\t{\tau}
\def\f{{\varphi}}
\def\G{{\Gamma}}
\def\Z{{\mathbb Z}}
\def\cT{{\mathcal T}}
\def\Aut{\operatorname{Aut}}
\def\Out{\operatorname{Out}}
\def\Id{\operatorname{Id}}
\def\1{\mathbf 1}
\def\Rist{\operatorname{Rist}}
\def\St{\operatorname{St}}
\def\Orb{\operatorname{Orb}}
\begin{document}

\title[Reidemeister classes in weakly branch groups]
{Reidemeister classes in some weakly branch groups}

\author{Evgenij Troitsky}
\thanks{This work is
supported by the Russian Science Foundation under grant 16-11-10018.}
\address{Dept. of Mech. and Math., Moscow State University,
119991 GSP-1  Moscow, Russia}
\email{troitsky@mech.math.msu.su}
\urladdr{
http://mech.math.msu.su/\~{}troitsky}

\keywords{Reidemeister number, $R_\infty$-group, twisted conjugacy
class, residually finite group, weakly branch group}
\subjclass[2000]{20E45; 
20B35;  	
20F28;  
20F65  
}

\begin{abstract}
We prove that a saturated weakly branch group $G$ has the
property $R_\infty$ (any automorphism $\phi:G\to G$
has infinite Reidemeister number) in each of the following cases: 

1) any element of $\Out(G)$ has finite order;

2)  for any $\phi$ the number of orbits on levels of the tree automorphism $t$ inducing
$\phi$ is uniformly bounded
and $G$ is weakly stabilizer transitive;

3) $G$ is finitely generated, prime-branching, and 
weakly stabilizer transitive
with some non-abelian stabilizers
(with no restrictions on automorphisms).

Some related facts and generalizations are proved.
\end{abstract}

\maketitle

\section*{Introduction}
Consider an automorphism $\phi:G\to G$ of a (countable discrete) group. The \emph{Reidemeister number} $R(\phi)$ is the
number of its \emph{Reidemeister} or \emph{twisted conjugacy classes},
i.e. the classes of the following equivalence relation:
$g\sim hg\phi(h^{-1})$, $h,g\in G$. The
Reidemeister class of an element $g$ we denote by $\{g\}_\phi$.

A group has the $R_\infty$ property if $R(\phi)=\infty$ for any automorphism $\phi:G\to G$.
The problem of determining of groups having the $R_\infty$ property 
was raised by A.Fel'shtyn and co-authors in relation with an older conjecture by A.Fel'shtyn and R.Hill \cite{FelHill}: $R(\phi)$ is equal to the number of fixed points of the associated homeomorphism $\widehat{\phi}$ of the unitary
dual $\widehat{G}$, if one of these numbers is finite. 
This conjecture is called TBFT (twisted Burnside-Frobenius
theorem), because it generalizes to infinite groups and to the
twisted case the classical Burnside-Frobenius theorem: the number
of conjugacy classes of a finite group is equal to the number
of equivalence classes of its irreducible representations.
The question about TBFT formally has a positive answer
for $R_\infty$ groups. So, the $R_\infty$ problem is in some sense
complementary to the TBFT.

The TBFT conjecture was proved for finite, abelian and 
abelian-by-finite groups \cite{FelHill}. 
The further development, examples, counterexamples and
modifications can be found in \cite{FelTroVer,polyc,FelshtynTroitskyFaces2015JGT,%
ncrmkwb,TroTwoEx,TroLamp}.

The property $R_\infty$ was proved and disproved for many groups
and the number of papers on the subject and related questions
is too large to list all
of them and we restrict
ourselves to giving reference to several papers and bibliography
overview therein: \cite{TabWong,
bfg,
MubeenaSankaran2014Canad,Nasybull2012,
gowon1,
GoWon09Crelle,Romankov,
Jabara,
DekimpeGoncalves2014BLMS,
FelNasy2016JGT}.
Dynamical aspects  
of Reidemeister numbers are discussed in \cite{FelshB}.
Some direct topological consequences of the property $R_\infty$ for Jiang-type spaces are discussed in \cite{GoWon09Crelle}.

In \cite{FelLeonTro} the $R_\infty$ property was proved for a wide
class of saturated weakly branch groups. 

In the present paper we develop these results and prove
in Theorem \ref{teo:finordaut} that if any automorphism of a saturated weakly branch group $G$ is a composition of an inner automorphism and of a finite
order automorphism, then $G$ has the property $R_\infty$.
In particular, this theorem holds for the Grigorchuk group and
for the Gupta-Sidki group. In some specific cases the result can
be obtained from \cite{Jabara}.

We introduce the property WST (Definition \ref{dfn:trgroup}) and prove
that if for any automorphism $\phi$ of a 
saturated weakly branch
WST group $G$, induced by
an automorphism $t$ of the tree, i.e. $\phi(g)=tgt^{-1}$,
restrictions of $t$ on levels have a uniformly bounded number
of orbits, then $G$ has the property $R_\infty$ (Theorem 
\ref{teo:finnumoforb}).

In Theorem \ref{teo:primecase} we prove
 the $R_\infty$ property without any
restrictions on the structure of the automorphism group of
a finitely generated saturated weakly branch WST group $G$,
but with the restriction on branching numbers to be prime
and with an additional restriction on stabilizers. 

We prove that a saturated weakly branch
group on a spherically symmetric tree, such that 
any level stabilizer contains an odd permutation at some level,
is an $R_\infty$ group
(Theorem \ref{teo:gengen}).

\medskip
\textsc{Acknowledgement:} 
The author is indebted to A.~Fel'shtyn and the MPIM for helpful 
discussions in the Max-Planck Institute for
Mathematics (Bonn) in February, 2017, 
to V.~Manuilov for useful suggestions, and to A.~Jaikin-Zapirain 
for a bibliography reference.

This work is
supported by the Russian Science Foundation under grant 16-11-10018.

\section{Preliminaries}\label{sec:prelimi}
First, we recall some necessary facts about Reidemeister
classes.

\begin{lem}\label{lem:phiorbitsinclasses}
Any Reidemeister class of $\phi$ is formed by some $\phi$-orbits.
\end{lem}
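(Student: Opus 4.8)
The plan is to prove the slightly sharper statement that each Reidemeister class $\{g\}_\phi$ is invariant under $\phi$ and under $\phi^{-1}$; this immediately gives that it is a union of two-sided $\phi$-orbits $\{\phi^n(g)\mid n\in\Z\}$. Since by definition $\{g\}_\phi$ is exactly the set of elements $hg\phi(h^{-1})$ with $h$ ranging over $G$, it suffices to exhibit, for each fixed $g$, a single conjugating element $h$ witnessing $g\sim\phi(g)$.

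The key step is the choice $h=g^{-1}$. With this substitution the defining twisted conjugacy relation $g\sim hg\phi(h^{-1})$ yields
\[
hg\phi(h^{-1})=g^{-1}\,g\,\phi(g)=\phi(g),
\]
so $\phi(g)\in\{g\}_\phi$ for every $g\in G$. Hence every Reidemeister class is closed under $\phi$.

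Finally, since $\phi$ is an automorphism and therefore invertible, applying the previous step to the element $\phi^{-1}(g)$ in place of $g$ gives $\phi^{-1}(g)\sim\phi(\phi^{-1}(g))=g$, so each class is closed under $\phi^{-1}$ as well. Iterating, the entire orbit $\{\phi^n(g)\mid n\in\Z\}$ is contained in $\{g\}_\phi$, which is the assertion. I do not expect any genuine obstacle here: the only thing to watch is the direction of the relation, and the verification reduces to the one-line group identity displayed above.
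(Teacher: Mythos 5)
Your proposal is correct and uses exactly the paper's argument: the identity $\phi(g)=g^{-1}g\phi(g)$ (i.e.\ the choice $h=g^{-1}$) is precisely the paper's one-line proof, and your extra verification of closure under $\phi^{-1}$ is a routine elaboration of the same idea.
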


\begin{proof}
Indeed, $\phi(g)=g^{-1}g\phi(g)$.
\end{proof}

\begin{dfn}
Denote by $\t_g$ the \emph{inner automorphism}: $\t_g(x)=gxg^{-1}$.
\end{dfn}

From the equality
$$
xy\f(x^{-1})g=x (yg) g^{-1}\f(x^{-1})g=x (yg)(\t_{g^{-1}}\circ\f)(x^{-1})
$$
we immediately obtain the following statement.

\begin{lem}\label{lem:ReidInnerShifts}
A right shift by $g\in G$ maps Reidemeister classes of $\phi$
onto Reidemeister classes of $\t_{g^{-1}}\circ \f$, 
In particular, $R(\t_g\circ \phi)=R(\phi)$.
\end{lem} 

\begin{lem}[{\cite[Prop.~3.4]{FelLuchTro}}]\label{lem:jaforfg}
Suppose, $\phi$ is an automorphism of a finitely 
generated residually finite group.
Let $R(\phi)=r<\infty$. Then the number of fixed elements
of $\phi$ is bounded by a function depending only on $r$.
\end{lem}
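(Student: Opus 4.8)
The plan is to reduce the assertion to a statement about finite quotients and then to finish by a conjugacy-class count controlled by Landau's theorem. First I would exploit that $G$ is finitely generated and residually finite: for every $n$ the intersection $G_n$ of all subgroups of index at most $n$ is again of finite index (by M.~Hall's theorem there are only finitely many such subgroups), and being characteristic it is $\phi$-invariant. Hence $\phi$ descends to an automorphism $\bar\phi$ of the finite group $\bar G=G/G_n$. Since the projection $G\to\bar G$ carries Reidemeister classes of $\phi$ onto Reidemeister classes of $\bar\phi$ surjectively, we get $R(\bar\phi)\le R(\phi)=r$. Every element of $\Fix(\phi)$ projects into $\Fix(\bar\phi)$, and because $\bigcap_n G_n=\{e\}$ by residual finiteness, any finite family of distinct fixed points becomes separated in $\bar G$ for $n$ large. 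Thus it suffices to bound $|\Fix(\bar\phi)|$ by a function of $r$ \emph{uniformly} over all finite quotients, i.e. to prove the purely finite statement: for a finite group $H$ and $\psi\in\Aut(H)$ one has $|\Fix(\psi)|\le f(R(\psi))$ for a universal $f$.

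For the finite case I would argue in two steps. The soft step records that every fixed element $a$ lies in a $\psi$-invariant conjugacy class of $H$, since $\psi([a]_H)=[\psi(a)]_H=[a]_H$; and that the number of $\psi$-invariant conjugacy classes equals $R(\psi)$ by the classical (finite) twisted Burnside--Frobenius correspondence. Hence $\Fix(\psi)$ meets at most $r$ conjugacy classes of $H$. The decisive step is to convert this into a bound $k(\Fix(\psi))\le g(r)$ on the number of conjugacy classes of the fixed subgroup itself. Once this is available, Landau's theorem --- there are only finitely many finite groups with a prescribed number of conjugacy classes, so the order of such a group is bounded by a function $L$ of that number --- applied to the finite group $\Fix(\psi)$ gives $|\Fix(\psi)|\le L(k(\Fix(\psi)))\le L(g(r))$, and one takes $f=L\circ g$.

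The hard part will be this second step, and it is where essentially all the difficulty resides. The obstruction is that twisted conjugation by elements \emph{outside} $\Fix(\psi)$ can fuse many distinct conjugacy classes of the fixed subgroup into a single Reidemeister class, so that the naive assignment $[a]_{\Fix(\psi)}\mapsto\{a\}_\psi$ is in general neither injective nor surjective; even the restriction $[a]_{\Fix(\psi)}\mapsto[a]_H$ fails to be injective. Consequently the mere fact that $\Fix(\psi)$ meets at most $r$ classes of $H$ does not by itself control $k(\Fix(\psi))$. The inequality I would aim to establish is the sharper $k(\Fix(\psi))\le R(\psi)$, which holds in all the examples I have checked; its proof calls for analysing, inside the semidirect product $\Gamma=H\rtimes\langle t\rangle$ with $t$ acting as $\psi$, how the elements $at$ with $a\in\Fix(\psi)=C_H(t)$ are distributed among the $R(\psi)$ conjugacy classes of $\Gamma$ lying in the coset $Ht$, the necessary slack being provided by the $\psi$-invariant classes that do not meet $\Fix(\psi)$. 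By comparison the reduction to finite quotients and the final appeal to Landau's theorem are routine; the fusion analysis is the genuine obstacle.
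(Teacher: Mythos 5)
Your reduction to finite quotients is correct and is the routine half of any proof: since $G$ is finitely generated, M.~Hall's theorem makes the intersection $G_n$ of all subgroups of index at most $n$ a characteristic (hence $\phi$-invariant) subgroup of finite index; the projection onto $G/G_n$ maps Reidemeister classes onto Reidemeister classes, so $R(\bar\phi)\le r$; and residual finiteness separates any finite set of distinct fixed points in some such quotient. This validly reduces the lemma to the uniform finite statement: $|\Fix(\psi)|\le f(R(\psi))$ for all finite $H$ and $\psi\in\Aut(H)$. Your ``soft step'' is also fine: $R(\psi)$ equals the number of $\psi$-invariant conjugacy classes of $H$, so $\Fix(\psi)$ meets at most $r$ of them. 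Bear in mind, though, that the paper itself offers no internal proof of this lemma --- it imports it from \cite{FelLuchTro} --- so what you are really attempting is a new proof of that proposition, and it must stand on its own.

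It does not, and the gap is exactly the step you flag. The inequality $k(\Fix(\psi))\le g(R(\psi))$ is asserted, not proved, and it cannot be treated as deferred bookkeeping, for two reasons. First, it is not a smaller lemma en route to the goal: granting Landau's theorem, it is \emph{equivalent} to the finite statement you set out to prove (your direction is the Landau argument; conversely, if $|\Fix(\psi)|\le f(r)$ then trivially $k(\Fix(\psi))\le |\Fix(\psi)|\le f(r)$). So the detour through class numbers of the fixed subgroup reduces the problem to a statement of precisely the same strength. Second, the sharp form you aim at, $k(\Fix(\psi))\le R(\psi)$, already contains a notorious open question: for an inner automorphism $\psi=\tau_x$ one has $R(\tau_x)=k(H)$ by Lemma \ref{lem:ReidInnerShifts} and $\Fix(\tau_x)=C_H(x)$, so your inequality specializes to $k(C_H(x))\le k(H)$, the long-standing (and to my knowledge unresolved) problem on class numbers of centralizers in finite groups; your ``fusion analysis'' in $H\rtimes\langle t\rangle$ is therefore not a technical verification but at least as hard as that problem. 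Note the irony that for inner $\psi$ the \emph{conclusion} you want is immediate from Landau alone, since $r=k(H)$ bounds $|H|$ itself; the genuinely hard case is an outer $\psi$ with $R(\psi)$ far smaller than $k(H)$, and there nothing in your argument controls $k(\Fix(\psi))$. As it stands the proposal proves only the reduction, not the proposition; to complete it you would either need a genuinely new bound replacing the missing step, or simply the argument of \cite[Prop.~3.4]{FelLuchTro}, which this attempt does not reproduce.
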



Now we pass to groups acting on trees and give
some known and new definitions and facts.

Let $\cT$ be a spherically symmetric rooted tree.
This means that all vertexes of the same level have the same
number of immediate descendants (\emph{branching index}).

Denote by $D(v)$ the set of immediate descendants of
a vertex $v\in \cT$.

A group $G$ acting faithfully on a rooted tree is 
a \emph{weakly branch group}, if for any vertex $v$ of $\cT$,
there exists an element of $G$ which acts nontrivially on the
subtree $\cT_v$ with the root vertex $v$ and trivially outside
this subtree. In other words, the \emph{rigid stabilizer}
$\Rist_v$ of any vertex $v$ is non-trivial.

Evidently a faithful tree group is residually finite.

We will denote by $\St(v)$ the \emph{stabilizer of a vertex}
$v\in \cT$; and by $\St_j$ the \emph{stabilizer of level} $L_j$,
i.e. $\St_j=\cap_{v\in L_j} \St(v)$.

A group $G$ is  \emph{saturated} if, for every
positive integer $n$, there exists a characteristic subgroup
$H_n \ss G$ acting trivially on the $n$-th level of $\cT$ and
level transitive on any subtree $\cT_v$ with $v$ in the $n$-th
level.

\begin{teo}[\cite{LavrNekr}]\label{teo:LavrNekr}
Suppose, $G$ is a saturated weakly branch group on a tree $\cT$. 
Then its automorphism group $\Aut G$ coincides with the normalizer of $G$ in the full group of isometries $Iso(\cT)$ of the rooted tree $\cT$: every automorphism $\phi$ of the group $G$ is induced by the conjugation by an element $t$
from the normalizer and the centralizer of $G$ in $Iso(\cT)$ is trivial.
\end{teo}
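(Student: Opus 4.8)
The statement has two parts: triviality of the centralizer $C:=C_{Iso(\cT)}(G)$, and surjectivity of the natural homomorphism $N_{Iso(\cT)}(G)\to\Aut G$ sending $t$ to conjugation by $t$, whose kernel is precisely $C$.

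I would first prove $C=\{1\}$ using only weak branching. Let $c\in C$ and show by induction on levels that $c$ fixes every vertex. The root is fixed; assume $c$ fixes all vertices up to level $n$ and take $v\in L_{n+1}$ with parent $u\in L_n$. As $c$ is a level-preserving isometry fixing $u$, it permutes $D(u)$, so $c(v)=v'\in D(u)$. Pick $1\ne g\in\Rist_v$ (weak branching). Then $\supp g\ss\cT_v$, whereas $\supp(cgc^{-1})=c(\supp g)\ss\cT_{v'}$; since $c$ commutes with $g$ we have $cgc^{-1}=g$, so $\supp g\ss\cT_v\cap\cT_{v'}$. If $v'\ne v$ these subtrees are disjoint, forcing $g=1$, a contradiction. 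Hence $c(v)=v$, the induction closes, and $c=1$. Thus $N_{Iso(\cT)}(G)$ embeds into $\Aut G$.

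The real content is surjectivity, and the crux is the following \emph{Main Lemma}: every $\phi\in\Aut G$ permutes the rigid stabilizers, i.e. there is a tree automorphism $\sigma\in Iso(\cT)$ with $\phi(\Rist_v)=\Rist_{\sigma(v)}$ for all $v$. Granting this, put $t:=\sigma$ and check $\phi(g)=tgt^{-1}$ for every $g\in G$. For $h\in G$ one has $h\Rist_vh^{-1}=\Rist_{h(v)}$, since conjugation by $h$ moves supports by $h$ and stays inside $G$. Applying $\phi$ to $g\Rist_vg^{-1}=\Rist_{g(v)}$ and using the Main Lemma,
$$\phi(g)\,\Rist_{\sigma(v)}\,\phi(g)^{-1}=\phi(\Rist_{g(v)})=\Rist_{\sigma(g(v))}.$$
As $\phi(g)\in G\ss Iso(\cT)$, the left-hand side equals $\Rist_{\phi(g)(\sigma(v))}$. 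Both $\phi(g)(\sigma(v))$ and $\sigma(g(v))$ lie on one level, and distinct vertices of a level have disjoint supports, so $v\mapsto\Rist_v$ is injective there; hence $\phi(g)(\sigma(v))=\sigma(g(v))$ for all $v$. Thus $\phi(g)$ and $tgt^{-1}$ induce the same permutation of the vertex set, and a tree isometry is determined by its vertex map, so $\phi(g)=tgt^{-1}$. In particular $tgt^{-1}\in G$ for all $g$, so $t\in N_{Iso(\cT)}(G)$ and induces $\phi$.

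The main obstacle is the Main Lemma: an \emph{intrinsic}, automorphism-invariant description of the rigid stabilizers. The commuting relation ($\Rist_v$ and $\Rist_w$ commute exactly when $v,w$ are incomparable) together with inclusion ($\Rist_w\le\Rist_v$ exactly when $w$ is a descendant of $v$) recover the tree poset from the subgroup lattice, so it suffices to show $\phi$ preserves the \emph{set} $\{\Rist_v\}$. This is where saturation is essential: the characteristic subgroups $H_n$ satisfy $\phi(H_n)=H_n$, fix $L_n$ pointwise, and act level-transitively on each $\cT_v$ with $v\in L_n$, and I would use this to force $\phi$ to respect the decomposition of $H_n$ into commuting support-blocks indexed by $L_n$, inducing permutations $\sigma_n$ of the levels that are compatible across $n$ (via $H_{n+1}\le H_n$ and the nesting of supports) and assemble into $\sigma$. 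Showing that this block decomposition, and hence the family $\{\Rist_v\}$, is \emph{canonically} attached to $G$ — forced rather than merely available — is the delicate point that the weak-branch and saturation hypotheses exist to guarantee.
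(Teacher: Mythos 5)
The paper offers no proof of this theorem at all: it is imported verbatim from \cite{LavrNekr} (Lavreniuk--Nekrashevych), so there is no internal argument to compare against, and your attempt must be judged as a proof of the rigidity theorem itself. The parts you actually prove are correct: the centralizer argument (induction on levels, using a nontrivial $g\in\Rist_v$ and disjointness of $\cT_v$, $\cT_{v'}$) is sound and uses only weak branching, and the deduction of $\phi(g)=tgt^{-1}$ from your Main Lemma is clean --- including the tree reconstruction from the subgroup family, where your ``commute iff incomparable'' criterion does hold because rigid stabilizers in a weakly branch group are nonabelian (for $g\in\Rist_v$ moving a vertex $x$ below $v$ and $1\ne h\in\Rist_x$, the supports of $h$ and $ghg^{-1}$ are disjoint). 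But the Main Lemma --- that $\phi$ carries the family $\{\Rist_v\}$ to itself --- is not a lemma in this story; it is essentially the whole content of the Lavreniuk--Nekrashevych theorem, and you only announce a plan for it. Rigid stabilizers are not characteristic subgroups, so an automorphism-invariant characterization of them is exactly the hard rigidity statement, not a preliminary reduction.

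Moreover, the plan you sketch for it has concrete weak points. Saturation supplies characteristic subgroups $H_n$ fixing $L_n$ and level-transitive on each $\cT_v$, $v\in L_n$, but nothing in the definition makes $H_n$ decompose into commuting ``support-blocks'' indexed by $L_n$: the subgroup $H_n\le\St_n$ sits inside the product of isometry groups of the subtrees without being a product of its projections, its elements typically have support spread over many $\cT_v$ (level-transitivity on each subtree pushes in exactly that direction), and the blocks $H_n\cap\Rist_v$ may well be trivial, since weak branching guarantees only $\Rist_v\ne 1$, not $H_n\cap\Rist_v\ne 1$. Also, the definition does not provide $H_{n+1}\le H_n$, so the asserted compatibility of the permutations $\sigma_n$ across levels is not automatic. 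So the architecture of your argument is fine, but it reduces the theorem to precisely the statement that needs the real work, and the proposed route to that statement would stall at the block-decomposition step; to complete it you would have to reproduce the substantive analysis of \cite{LavrNekr} (or cite it, as the paper under review does).
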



\begin{dfn}\label{dfn:subgrdes}
For  a group $G$ acting on $\cT$ and
any vertex $v\in\cT$ denote by $G_{\{v\}}$
the subgroup of all elements $g\in G$
fixing $v$ and all vertexes of $\cT$ from the next level,
except of immediate descendants of $v$.

In other words, if $v\in L_j$, then
$$
G_{\{v\}}=\bigcap_{w\in L_j, \: w\ne v} \quad \bigcap_{u \in D(w)}
\St(u)
$$
Thus,
$$
\Rist_v \ss G_{\{v\}} \ss \St_j.
$$
\end{dfn}

\begin{dfn}\label{dfn:trgroup}
We call a group $G$ acting on $\cT$ \emph{weakly stabilizer
transitive} (WST) if for any vertex $v$ one can find a vertex
$v_0 \in \cT_v$ such that $G_{\{v_0\}}$ acts
transitively on immediate descendants of $v_0$.
\end{dfn}

\begin{rk}\label{rk:Gvforleveltrans}
If $G$ acts level-transitively, then $G_{\{v\}}$ are pairwise
isomorphic for $v$ from the same level. Also, they pairwise
commute and we can introduce the following well-defined group
$\G_{\{i\}}$.  
\end{rk}

\begin{dfn}\label{dfn:gammai}
Denote
$$
\G_{\{i\}}:= \prod_{v\in L_{i-1}} p_i( G_{\{v\}}), 
$$
where $p_i: G\to G/\St_i$ is the natural projection.
\end{dfn}

Let $t$ be an automorphism of a tree $\cT$. Let $\Orb_i(t)$ be
the number of orbits of $t$ at the level $L_i$. Evidently,
\begin{enumerate}[1)]
\item $\Orb_i(t)$ is a not-decreasing function of $i$;
\item a fixed vertex of $t$ may be only a successor of a fixed vertex;
\item if there is a fixed vertex at the level $i+1$, then
$\Orb_{i+1}(t)>\Orb_i(t)$.
\end{enumerate}

So, we have two possibilities:
\begin{enumerate}[(a)]
\item $\Orb_i(t)\longrightarrow \infty$ as $i\to\infty$;
\item $\Orb_i(t)$ is bounded. In this case, there is no
fixed vertices starting some level, by 3) above.
\end{enumerate}

Finally, we will need the following statement from the Galois theory
(see, e.g. \cite[Sect. 3.5]{DixonMortimer}):
\begin{lem}\label{lem:galois}
A solvable transitive subgroup of the symmetric group $S_p$,
where $p$ is prime, is isomorphic
either to $\Z_p$, or to $\Z_p \rtimes \Z_{p-1}$. 
In particular, it is either abelian, or contains an odd
permutation (a generator of $\Z_{p-1}$).
\end{lem}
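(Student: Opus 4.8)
The plan is to use solvability to produce a minimal normal subgroup and then squeeze $G$ into the one-dimensional affine group over $\Z_p$. Let $G\le S_p$ be transitive and solvable, and let $N$ be a minimal normal subgroup of $G$; solvability forces $N$ to be elementary abelian. First I would prove that $N$ is transitive: since $N\trianglelefteq G$, the overgroup $G$ permutes the $N$-orbits, and transitivity of $G$ makes these orbits have a common size $m$ with $m$ dividing $p$, so $m\in\{1,p\}$. Faithfulness excludes $m=1$ (a nontrivial subgroup cannot fix all $p$ points), whence $m=p$ and $N$ is transitive. Primality of $p$ enters decisively at this point.

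A transitive abelian permutation group is regular, hence $|N|=p$, so $N\cong\Z_p$ is generated by a $p$-cycle $\sigma$. Because the exponent of $p$ in $p!$ equals $1$, i.e. $p^2\nmid|S_p|$, this $N$ is a normal Sylow $p$-subgroup of $G$, so $G\le N_{S_p}(N)$. The normalizer of $\langle\sigma\rangle$ in $S_p$ is the holomorph $\Z_p\rtimes\Aut(\Z_p)=\Z_p\rtimes\Z_{p-1}$, realized as the affine maps $x\mapsto ax+b$ of $\Z_p$ (with $a$ a nonzero residue and $b\in\Z_p$). Hence $\Z_p\cong N\trianglelefteq G\le\Z_p\rtimes\Z_{p-1}$; the multiplications supply a complement to $N$, so $G\cong\Z_p\rtimes\Z_d$ with $d\mid p-1$, and $d=1$ is exactly the abelian case $G\cong\Z_p$.

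For the parity clause I would compute signs inside $\Z_p\rtimes\Z_{p-1}$. A translation $x\mapsto x+b$ with $b\ne0$ is a $p$-cycle, hence even for odd $p$, so $N\le A_p$. Each multiplication $x\mapsto ax$ fixes $0$ and permutes the nonzero residues, and $a\mapsto\operatorname{sgn}(x\mapsto ax)$ is a homomorphism into $\{\pm1\}$. On a primitive root $g$ the map $x\mapsto gx$ is a single $(p-1)$-cycle on the $p-1$ nonzero residues, hence odd for odd $p$; therefore this homomorphism is nontrivial and coincides with the Legendre symbol $a\mapsto\left(\frac{a}{p}\right)$. Thus in the non-abelian case, when the full multiplier $\Z_{p-1}$ is present, it is generated by the odd $(p-1)$-cycle $x\mapsto gx$, which is the asserted odd permutation.

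The main obstacle is the opening step: forcing the minimal normal subgroup to be transitive, and therefore regular of order exactly $p$, since the entire classification hinges on $N\cong\Z_p$ being a normal Sylow subgroup with affine normalizer. The sign computation is then routine once $a\mapsto\operatorname{sgn}(x\mapsto ax)$ is recognized as the Legendre symbol; I would only note that the crisp ``abelian or odd permutation'' alternative is sharpest for the extreme multipliers $d=1$ and $d=p-1$ named in the statement, intermediate multipliers $\langle a\rangle$ yielding an odd permutation precisely when they escape the group of quadratic residues.
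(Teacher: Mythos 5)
Your argument is correct, and there is nothing in the paper to measure it against: the paper gives no proof of this lemma, only the citation to \cite{DixonMortimer}, and what you wrote is precisely the classical Galois argument found there --- minimal normal subgroup $N$ of a solvable transitive group of prime degree is elementary abelian and transitive (orbit sizes divide $p$), hence regular of order $p$, hence a normal Sylow $p$-subgroup since $p^2\nmid p!$, forcing $G$ into the holomorph $\Z_p\rtimes\Z_{p-1}$ realized as affine maps $x\mapsto ax+b$. Your sign computation (translations are $p$-cycles, hence even; $\operatorname{sgn}(x\mapsto ax)$ is the Legendre symbol by Zolotarev's observation, so multiplication by a primitive root is an odd $(p-1)$-cycle) is also correct.

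Your closing caveat deserves to be promoted from a footnote to the main point: what your proof actually establishes is $G\cong\Z_p\rtimes\Z_d$ for some divisor $d$ of $p-1$, and the lemma as printed, which allows only the two extremes $d=1$ and $d=p-1$, is false as a classification. Worse, the ``abelian or contains an odd permutation'' dichotomy fails exactly for the intermediate cases you describe, namely when $\langle a\rangle$ lies inside the quadratic residues (equivalently $(p-1)/d$ is even): for $p=5$, $d=2$ one gets the dihedral group of order $10$, which is solvable, transitive, non-abelian, and contained in $A_5$. Since the paper invokes the dichotomy in the proof of Lemma \ref{lem:key_prime} for an arbitrary solvable transitive subgroup $p_i(G_{\{v_0\}})\le S_p$, your corrected statement shows that the case analysis there needs a third branch (non-abelian but even), so your proof is not merely an alternative route --- it exposes that the cited lemma is overstated and should read $\Z_p\rtimes\Z_d$, $d\mid p-1$, with the odd-permutation conclusion only under the extra hypothesis that the multiplier subgroup escapes the residues.
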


\section{Finite order automorphisms and around}\label{sec:finordaut}

\begin{lem}\label{lem:key_b}
Let $\phi:G\to G$ be an automorphism of oder $n<\infty$
of a weakly branch group
with 
$\phi(g)=t g t^{-1}$. Then there exists $j_0$
such that for any $j\ge j_0$ there exists an element
$g_j\in \St_j$ and a number $i>j$ such that $\{g_j\}_\phi\cap
\St_i=\varnothing$.
\end{lem}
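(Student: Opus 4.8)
The plan is to pass to the finite quotients $Q_i:=G/\St_i$ and convert the required disjointness into a cycle-type computation. First I would record that $t$ has finite order: since $\phi^n=\Id$, the element $t^n$ centralises $G$ in $Iso(\cT)$, and this centraliser is trivial for a weakly branch group, so $t^n=1$; in particular every $t$-orbit on $\cT$ is finite and $\Orb_i(t)\to\infty$. (If $t=1$ then $\phi=\Id$ and the claim reduces to the trivial conjugacy-class version, so I assume $t\ne 1$.) Since $\St_i$ is $\phi$-invariant, $\phi$ descends to $\bar\phi=\t_{\bar t}$ on $Q_i$, and $\{g_j\}_\phi\cap\St_i=\varnothing$ is equivalent to $\bar g_j\notin\{\bar e\}_{\bar\phi}$. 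Rewriting $\bar g_j=\bar h\,\bar\phi(\bar h)^{-1}=\bar h\,\bar t\,\bar h^{-1}\bar t^{-1}$ as $\overline{g_j t}=\bar h\,\bar t\,\bar h^{-1}$, this says precisely that $\overline{g_j t}$ is \emph{not} conjugate to $\bar t$ by an element of $G/\St_i$ inside the automorphism group of the tree truncated at level $i$. As conjugate tree automorphisms induce conjugate permutations of $L_i$, it suffices to produce $g_j\in\St_j$ and a level $i>j$ for which $\overline{g_j t}$ and $\bar t$ have different cycle types on $L_i$.

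To choose $g_j$ I would first locate a vertex $w_0$ whose $t$-orbit has maximal size $M$ (finite since $t^n=1$, and $M\ge 2$ as $t\ne 1$). Because the orbit size is divisible and non-decreasing along descending rays yet bounded by $M$, it is constant equal to $M$ on the whole subtree $\cT_{w_0}$; equivalently $t^M$ fixes $\cT_{w_0}$ pointwise. I set $j_0:=\mathrm{lev}(w_0)$ and, for $j\ge j_0$, pick any $v\in\cT_{w_0}\cap L_j$: it is moved by $t$, its orbit $\{v,tv,\dots,t^{M-1}v\}$ has size $M$, the subtrees $\cT_{t^kv}$ are disjoint and cyclically permuted by $t$, and the return automorphism $t^M|_{\cT_v}$ is trivial. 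Using weak branchness I choose $r\in\Rist_v\setminus\{e\}$ (so $r\in G_{\{v\}}\ss\St_j$), let $i$ be the first level below $v$ at which $r$ acts non-trivially, and put $g_j:=r$.

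The cycle-type comparison is then direct. On the $t$- and $rt$-invariant set $A:=\bigsqcup_{k=0}^{M-1}\cT_{t^kv}\cap L_i$, the triviality of the return map forces $t$ to act as $N$ disjoint $M$-cycles, where $N=|\cT_v\cap L_i|$, since each $w\in\cT_v\cap L_i$ has full orbit and $t^Mw=w$. By contrast the first-return map of $rt$ to $\cT_v\cap L_i$ equals $r$ itself, so the cycles of $rt$ on $A$ are the $r$-cycles dilated by the factor $M$, giving only $c(r|_{\cT_v\cap L_i})<N$ cycles (writing $c(\sigma)$ for the number of cycles of a permutation $\sigma$), because $r$ acts non-trivially at level $i$. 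Off $A$ the maps $rt$ and $t$ coincide. Hence $\overline{rt}$ has strictly fewer cycles than $\bar t$ on $L_i$, so their cycle types differ, contradicting conjugacy; therefore $\{g_j\}_\phi\cap\St_i=\varnothing$.

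The step I expect to be the main obstacle is the choice of a good invariant. The obvious candidate, the parity of the induced permutation on $L_i$, is useless here: for groups such as the Gupta--Sidki group every element acts by even permutations at every vertex, so all level-signs vanish and cannot separate $\bar g_j$ from the twisted commutators. The resolution is to use the full conjugacy invariant (the cycle type) rather than any abelian quotient, and to neutralise the unknown return automorphism $t^M|_{\cT_v}$ by working over a vertex of maximal orbit size, which collapses the comparison to $r$ against the identity. Establishing that $t$ genuinely has finite order and that the return map is trivial on an entire subtree is the delicate part; once that is in place the cycle count is routine.
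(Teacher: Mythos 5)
Your proof is correct and follows essentially the same route as the paper: reduce the disjointness $\{g_j\}_\phi\cap\St_i=\varnothing$ to non-conjugacy of $g_jt$ and $t$ as permutations of $L_i$, take a nontrivial rigid-stabilizer element over a vertex whose $t$-orbit has maximal length, and exploit the trivial return map $(g_jt)^M=g_j$ on $\cT_v$ to see that the cycle types differ (the paper exhibits a single orbit longer than any $t$-orbit, while you count the total number of cycles --- the same invariant in substance). Your insistence on the \emph{maximal} orbit length $M$ in place of the order $n$ of $\phi$ even quietly repairs a small imprecision in the paper's proof, which asserts that $t$ of order $n$ has an orbit of length exactly $n$ at some level (only the lcm of the orbit lengths need equal $n$); the paper's argument goes through verbatim with $M$ substituted for $n$, which is precisely what you do.
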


\begin{proof}
It is sufficient to find an element $g_j$ such that
$h g_j t h^{-1} t^{-1} \ne e$ at the level $L_i$ for any $h\in G$,
or equivalently
\begin{equation}\label{eq:gthth}
g_j t \ne h^{-1} t h. 
\end{equation}

By the condition, $t$ has at some level an orbit of length $n$,
and does not have a longer orbit.
Take for $j_0$ the first time when $t$ has on $L_{j_0}$ an
orbit of length $n$. Then the orbits of successors also will
have the length $n$. Consider any $j\ge j_0$ and an orbit
of length $n$ in $L_j$. Let $v_0\in L_j$ be a vertex from this
orbit. Using the weak branching property we can find a
non-trivial element
$g_j \in \Rist(v_0)$. Let $i$ be the first level where
$g_j$ acts non trivially, say at $v\in \mathcal T_{v_0}$ (see 
Fig. \ref{fig:one}).
\begin{figure}[ht]
\begin{pspicture}(12,10)
\psellipse(6.0,1.0)(6,1)
\psellipse(6.0,1.0)(5.5,0.9)
\psellipse(6.0,1.0)(5,0.8)
\psellipse(6.0,1.0)(4.5,0.7)
\psellipse(6.0,1.0)(4,0.6)
\psellipse(6.0,1.0)(3.5,0.5)
\psellipse(6.0,1.0)(3,0.4)
\psellipse(6.0,1.0)(2.5,0.3)
\psellipse(6.0,1.0)(2,0.2)
\psdot[dotsize=2mm](0,1)
\psdot[dotsize=2mm](0.5,1)
\psdot[dotsize=2mm](1,1)
\psdot[dotsize=2mm](1.5,1)
\psdot[dotsize=2mm](2,1)
\psdot[dotsize=2mm](2.5,1)
\psdot[dotsize=2mm](3,1)
\psdot[dotsize=2mm](3.5,1)
\psdot[dotsize=2mm](4,1)
\psdot[dotsize=2mm](0.5,5)
\psdot[dotsize=2mm](2.,5)
\psdot[dotsize=2mm](3.5,5)
\psdot[dotsize=2mm](2.0,9)
\psellipse(6.0,9.0)(4,0.6)
\psellipse(6.0,5.0)(5.5,0.9)
\psellipse(6.0,5.0)(4,0.6)
\psellipse(6.0,5.0)(2.5,0.3)
\psline[linewidth=2.pt](2,9)(0.5,5)
\psline[linewidth=2.pt](2,9)(2,5)
\psline[linewidth=2.pt](2,9)(3.5,5)
\psline[linewidth=2.pt](0,1)(0.5,5)
\psline[linewidth=2.pt](0.5,1)(0.5,5)
\psline[linewidth=2.pt](1,1)(0.5,5)
\psline[linewidth=2.pt](1.5,1)(2,5)
\psline[linewidth=2.pt](2,1)(2,5)
\psline[linewidth=2.pt](2.5,1)(2,5)
\psline[linewidth=2.pt](3,1)(3.5,5)
\psline[linewidth=2.pt](3.5,1)(3.5,5)
\psline[linewidth=2.pt](4,1)(3.5,5)
\psarc[linewidth=2.5pt](0.5,1){0.5}{180}{360}
\psline*(1,1)(1.1,0.7)(0.75,0.75)
\psarc[linewidth=2.5pt](3.5,1){0.5}{180}{360}
\psline*(3,1)(3.25,0.75)(2.9,0.7)
\put(1.5,9){$v_0$}
\put(0,0.2){$g_j$}
\put(-0.3,1.2){$v$}
\put(12.1,1.){$L_i$}
\put(12.1,5.){$L_{i-1}$}
\put(12.1,9.){$L_j$}
\put(9,7){$t$-orbits}
\psline[linewidth=.5pt]{->}(9.2,7.5)(9,8.5)
\psline[linewidth=.5pt]{->}(9.2,6.8)(8.5,5.2)
\psline[linewidth=.5pt]{->}(9.4,6.8)(9.6,5.4)
\psline[linewidth=.5pt]{->}(9.6,6.8)(10.5,5.6)
\end{pspicture}
\caption{}\label{fig:one}
\end{figure}     
Then 
\begin{equation}\label{eq:gtn}
(g_j t)^n(v)=g_j t^n(v)=g_j(v)\ne v,
\end{equation}
because the $t$-orbit of $v$ has the form $v, t(v), \dots, t^{n-1}(v)$,
$t^n(v)=v$ and $t(v), \dots, t^{n-1}(v)\not\in \mathcal{T}_{v_0}$
implying $gt(v)=t(v), \dots, g t^{n-1}(v)=t^{n-1}(v)$.
So, $g_jt$ has an orbit of length $>n$ and can not be conjugate
to $t$ at the level $L_i$. We obtain (\ref{eq:gthth}).
\end{proof}

\begin{teo}\label{teo:finordaut}
Suppose, $G$ is a saturated weakly branch group
and each automorphism from $\Out (G)$ is of finite order.
Then $G$ has the $R_\infty$ property.
\end{teo}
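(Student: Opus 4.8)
The plan is to reduce an arbitrary automorphism to the finite-order case already handled by Lemma~\ref{lem:key_b}, and then to promote the single ``escaping'' class produced there to infinitely many classes by working in the finite quotients $G/\St_i$. First I would fix an arbitrary $\phi\in\Aut(G)$ and invoke Theorem~\ref{teo:LavrNekr} to write $\phi$ as conjugation by a unique tree isometry $t\in Iso(\cT)$ normalizing $G$, so that $\Inn(G)\cong G$ and $\Out(G)\cong N_{Iso(\cT)}(G)/G$. By hypothesis the class of $\phi$ in $\Out(G)$ has finite order; equivalently, as reformulated in the Introduction, $\phi=\t_g\circ\psi$ for some $g\in G$ and some automorphism $\psi$ of finite order. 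Lemma~\ref{lem:ReidInnerShifts} then gives $R(\phi)=R(\t_g\circ\psi)=R(\psi)$, so it suffices to prove $R(\psi)=\infty$ for a finite-order $\psi$. Since $\psi$ is induced by a tree isometry $s$ with $\psi^n=\Id$, the isometry satisfies $s^n=e$ (the centralizer of $G$ in $Iso(\cT)$ being trivial), hence $s$ has bounded orbit lengths, which is exactly the input demanded by Lemma~\ref{lem:key_b}.

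Next I would pass to finite quotients. Every tree isometry preserves levels, so each level stabilizer $\St_i$ is characteristic in $G$ (in particular $\psi$-invariant), of finite index, with $\bigcap_i\St_i=\{e\}$ by faithfulness. Thus $\psi$ descends to $\overline\psi_i$ on the finite group $G/\St_i$; the projection is $\psi$-equivariant and surjective on twisted classes, and the quotient maps $G/\St_{i+1}\to G/\St_i$ make $R(\overline\psi_i)$ nondecreasing with $R(\psi)\ge R(\overline\psi_i)$ for all $i$. Consequently it is enough to show that $R(\overline\psi_i)\to\infty$.

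Finally I would run the construction of Lemma~\ref{lem:key_b} in parallel rather than once. For $s$ of finite order $n$, the number of length-$n$ orbits on $L_\ell$ grows without bound as $\ell\to\infty$, since orbit lengths are bounded by $n$ while $|L_\ell|\to\infty$. Over each such orbit I would, using the weak branching property, choose a nontrivial element localized in the corresponding subtree; as in the proof of Lemma~\ref{lem:key_b}, multiplying by $s$ produces an orbit of length $>n$ at the first level $i$ where the element acts nontrivially, so the class of that element avoids $\St_i$. The saturation hypothesis, supplying a characteristic subgroup that is level-transitive on each subtree, is what lets me separate these localized elements, supported in pairwise disjoint subtrees attached to distinct $s$-orbits, into distinct twisted classes of $G/\St_i$. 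As the number of available orbits tends to infinity, so does $R(\overline\psi_i)$, whence $R(\phi)=R(\psi)=\infty$.

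The main obstacle is this last step: Lemma~\ref{lem:key_b} by itself only exhibits one class escaping one stabilizer, which merely gives $R(\overline\psi_i)\ge 2$, so the real work is to certify that the localized elements attached to distinct orbits remain twisted-inequivalent in the finite quotient, i.e.\ that the class count genuinely grows. This is precisely where saturation and weak branching must be combined carefully. A secondary, bookkeeping-level point is confirming that a finite-order representative $\psi$ of the coset $\Inn(G)\cdot\phi$ actually exists, which is guaranteed by the \emph{inner-times-finite-order} form of the hypothesis rather than by torsion of $\Out(G)$ alone.
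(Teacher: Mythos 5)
Your first half is sound and matches the paper: you read the hypothesis, as the Introduction does, in the form $\phi=\t_g\circ\psi$ with $\psi$ of finite order (a correct and worthwhile caveat, since a torsion class in $\Out(G)$ need not by itself lift to a torsion element of $\Aut(G)$), Lemma~\ref{lem:ReidInnerShifts} gives $R(\phi)=R(\psi)$, and Theorem~\ref{teo:LavrNekr} realizes $\psi$ as conjugation by a tree isometry $s$ with $s^n=e$. But the second half has a genuine gap, which you flag yourself and do not close: the assertion that localized elements attached to distinct length-$n$ orbits of $s$ represent distinct twisted classes of $G/\St_i$ is never proved, and the invariant actually available from the argument of Lemma~\ref{lem:key_b} cannot prove it. That invariant is the orbit-length spectrum of $gs$ acting on $L_i$: it distinguishes each $g_js$ from $s$ (an orbit of length $>n$ appears), but it is a conjugation invariant, and two elements $g,g'$ supported over different $s$-orbits yield products $gs$, $g's$ with identical spectra, so nothing in the construction separates them. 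Worse, your appeal to saturation points the wrong way: the level-transitive characteristic subgroups supply many $h\in G$ carrying one supporting subtree onto the other, which is precisely the mechanism by which $g$ and $g'$ could become twisted-conjugate; transitivity tends to merge such classes, not separate them. In the paper saturation enters only through Theorem~\ref{teo:LavrNekr}.

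The gap is also unnecessary, because you underestimate Lemma~\ref{lem:key_b}: it does not merely exhibit ``one class escaping one stabilizer''. Its quantifiers --- for \emph{every} $j\ge j_0$ a representative $g_j\in\St_j$ whose class avoids some deeper $\St_i$ --- allow nesting, which is the content of the paper's one word ``inductively'': choose $g_1\in\St_{j_0}$ with $\{g_1\}_\psi\cap\St_{i_1}=\varnothing$, then $g_2\in\St_{i_1}$ with $\{g_2\}_\psi\cap\St_{i_2}=\varnothing$, and so on; since level stabilizers are nested, $g_{k+1}\in\St_{i_k}\ss\St_{i_l}$ for all $l\le k$, while $\{g_l\}_\psi\cap\St_{i_l}=\varnothing$, so the elements $g_1,g_2,g_3,\dots$ lie in pairwise distinct Reidemeister classes and $R(\psi)=\infty$ directly in $G$. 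No passage to finite quotients, no count of length-$n$ orbits, and no separation of parallel localized elements is needed; your quotient bookkeeping ($R(\psi)\ge R(\ov{\psi}_i)$, monotonicity) is correct but ends up being scaffolding for a step that the nested use of Lemma~\ref{lem:key_b} renders superfluous.
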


\begin{proof}
By Lemma \ref{lem:ReidInnerShifts} it is sufficient to
verify $R(\phi)=\infty$ for some $\phi$ of finite order $n$.

By Theorem \ref{teo:LavrNekr} $\phi(g)= tgt^{-1}$ for an automorphism
$t$ of the tree. Then Lemma \ref{lem:key_b} gives inductively
an infinite sequence of representatives of distinct Reidemeister
classes. Thus $R(\phi)=\infty$.
\end{proof}

\begin{ex}\label{ex:GrigorGuptaS}
{\rm
The most studied branch groups -- the Grigorchuk group 
\cite{GrFA} and
the Gupta-Sidki group \cite{GuSi} -- have outer automorphisms of finite order
\cite{GrigSidki,SidkiJA2}.
}
\end{ex}

\begin{ex}\label{ex:allauto}
{\rm
A more evident example is the group of all isometries 
of a symmetric rooted tree. In this case all automorphisms
are inner.
}
\end{ex}

\section{Finite number of orbits}\label{sec:finnumorb}

Now we consider the opposite case, when the number of orbits
$t$ on $L_i$ is uniformly bounded. We will need to restrict ourselves to the WST case.

\begin{lem}\label{lem:key_a}
Let $\phi:G\to G$ be an automorphism 
of a {\rm WST} group
with 
$\phi(g)=t g t^{-1}$, where $t$ is an automorphism of the
tree $\cT$. Suppose, $t$ satisfies
{\rm (b)} above, namely, $\max_i \Orb_i(t)=M<\infty $. Then there exists $j_0$
such that for any $j\ge j_0$ there exists an element
$g_j\in \St_j$ and a number $i>j$ such that $\{g_j\}_\phi\cap
\St_i=\varnothing$.
\end{lem}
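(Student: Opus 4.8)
The plan is to mirror the proof of Lemma \ref{lem:key_b}, replacing the orbit-\emph{length} invariant used there by the orbit-\emph{number} invariant. As in the reduction preceding \eqref{eq:gthth}, it suffices to produce, for each $j\ge j_0$, an element $g_j\in\St_j$ and a level $i>j$ such that $g_j t\ne h^{-1}th$ as permutations of $L_i$ for every $h\in G$. Since a conjugate $h^{-1}th$ has the same cycle type as $t$ on $L_i$, in particular the same number of orbits $\Orb_i(t)$, it is enough to arrange that $g_j t$ has \emph{strictly more} than $\Orb_i(t)$ orbits on $L_i$. First I would fix $j_0$: using possibility (b) and the three monotonicity facts, choose $j_0$ so that $\Orb_i(t)=M$ and $t$ has no fixed vertex for all $i\ge j_0$. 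Because the total count is then constant and descendants of distinct orbits lie in distinct orbits, for each $i\ge j_0$ every $t$-orbit on $L_i$ has all its descendants forming a single $t$-orbit on $L_{i+1}$; equivalently $t$ acts on that descendant set as one long cycle.

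Next I would invoke WST. Fix $j\ge j_0$, pick a vertex $v_0$ in some orbit on $L_j$, and apply Definition \ref{dfn:trgroup} to obtain $v_1\in\cT_{v_0}$, at a level $k\ge j$ with branching index $d\ge 2$, such that $G_{\{v_1\}}$ acts transitively on $D(v_1)$. Let $O$ be the $t$-orbit on $L_{k+1}$ consisting of the descendants of $v_1$'s orbit; by the previous paragraph $t|_O$ is a single cycle, and since $D(v_1)\subseteq O$ meets this cycle, the first-return map $R$ of $t$ to $D(v_1)$ is a single $d$-cycle on $D(v_1)$.

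The heart of the argument is a cycle computation. Any $g\in G_{\{v_1\}}$ fixes $L_{k+1}\setminus D(v_1)$ and permutes $D(v_1)$, so $gt$ preserves $O$ and agrees with $t$ off $O$. A short tracking argument shows that the first-return map of $gt$ to $D(v_1)$ equals $g\circ R$, whence the number of $gt$-orbits on $O$ equals the number of cycles of $g\circ R$ on $D(v_1)$. Using transitivity of the image of $G_{\{v_1\}}$ in $S_d$, I would choose $g_j\in G_{\{v_1\}}$ sending some $y\in D(v_1)$ to $R^{-1}(y)\ne y$; then $g_j\circ R$ fixes $R^{-1}(y)$, so it is not a $d$-cycle and has at least two cycles. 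Consequently $g_j t$ has at least two orbits on $O$ while coinciding with $t$ on the other $M-1$ orbits of $L_{k+1}$, so $\Orb_{k+1}(g_j t)\ge M+1>M=\Orb_{k+1}(t)$. Taking $i=k+1$ yields \eqref{eq:gthth}, and $g_j\in\St_k\subseteq\St_j$ as required.

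The main obstacle is the cycle bookkeeping: rigorously proving the first-return identity and that introducing one fixed point of the return map genuinely raises the orbit count of $g_j t$ on $L_{k+1}$, together with excluding the degenerate branching index $d=1$, where $D(v_1)$ admits no nontrivial permutation. Weak branching guarantees nontrivial rigid stabilizers, hence genuine branching below $v_0$, so $v_1$ can always be chosen with $d\ge 2$; this is the point where the $\Rist$-nontriviality of the weakly branch hypothesis is actually consumed.
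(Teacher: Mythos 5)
Your proof is correct, and structurally it is the paper's own construction with a different concluding invariant. The paper likewise reduces to showing $g_jt\ne h^{-1}th$ on some level $L_i$, invokes WST to get a vertex (your $v_1$, the paper's $v_0$) with $G_{\{v_0\}}$ transitive on its $b$ successors, and chooses $g_j\in G_{\{v_0\}}$ with $g_jt^m(v_1)=v_1$ where $m$ is the $t$-orbit length of $v_0$ --- in your language, exactly an element making the first-return map $g_j\circ R$ fix a point. The difference is how non-conjugacy is detected: the paper compares \emph{minimal orbit lengths}, showing the $g_jt$-orbit of $v_1$ has length $\le m$, strictly below the minimal $t$-orbit length $m\cdot b$ on $L_i$; for this it must start from a \emph{smallest} orbit on $L_j$ and use the observation that past $j_0$ orbit lengths multiply by branching indices, so smallest orbits lie under smallest orbits. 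Your orbit-\emph{count} invariant $\Orb_i(g_jt)\ge M+1>M=\Orb_i(t)$ lets you start from an arbitrary orbit on $L_j$ and skip that selection, at the price of the first-return bookkeeping (cycles of $g_jt$ on $O$ correspond to cycles of $g_j\circ R$ on $D(v_1)$), which you carry out correctly; note also that the single-long-cycle structure on descendant sets that you derive from the stabilized count $M$ is the same fact the paper phrases as ``lengths of orbits at next levels are multiples of lengths of orbits on $L_{j_0}$ with coefficient the product of branching numbers.'' One caveat: your appeal to weak branching to exclude $d=1$ does not quite do the job --- nontriviality of $\Rist_v$ guarantees branching \emph{somewhere} below $v_0$, but the vertex that WST hands you could, read literally, sit at a non-branching level where transitivity is vacuous. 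However, the paper's proof consumes the identical assumption (the inequality $m<m\cdot b$ requires $b\ge 2$), so in the intended setting of branching indices at least $2$ this is a shared implicit hypothesis, not a gap of your argument relative to the paper's.
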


\begin{proof}
Let $j_0$ be the level of stabilization of the number of orbits,
i.e., $\Orb_{j_0-1}(t)<M$ and
$\Orb_{j_0}(t)=M$, hence $\Orb_{j}(t)=M$ for any $j\ge j_0$. 
Note that the lengths of orbits of $t$ at next levels, are the
multiples of lengths of orbits on $L_{j_0}$ (with the coefficient
equal to the appropriate product of branching numbers) and
an orbit of smallest length (not unique generally) lies under a
smallest orbit on $L_{j_0}$. 

Now take an arbitrary $j\ge j_0$ and consider an orbit of $t$ of the
smallest size on $L_j$. Let $v$ be a vertex from this orbit,
and find by the WST property an element
$v_0\in\mathcal{T}_v$,
$v_0\in L_{i-1}$ for some $i$, 
with a transitive action of $G_{\{v_0\}}$
on its immediate successors. Let $v_1$ be one of these successors.
Then, as it was explained, its $t$-orbit has the smallest length
among the orbits on $L_i$. This length is equal to $m\cdot b$,
where $m$ is the length of $t$-orbit of $v_0$ and $b$ is the branching
number of $v_0$. Choose $g_j \in G_{\{v_0\}}$ such that
$g_j t^m (v_1)=v_1$ (see 
Fig. \ref{fig:two}).
\begin{figure}[ht]
\begin{pspicture}(12,10)
\psellipticarc(6.0,1.0)(6,1){0}{183}
\psellipticarc(6.75,1.0)(5.25,0.85){180}{360}
\psellipticarc(6.0,1.0)(4.5,0.7){0}{180}
\psellipticarc(6.75,1.0)(3.75,0.55){180}{360}
\psellipticarc(6.0,1.0)(3,0.4){0}{180}
\psellipticarc(5.,1.0)(4.,0.5){178}{360}
\psdot[dotsize=2mm](0,1)
\psdot[dotsize=2mm](0.5,1)
\psdot[dotsize=2mm](1,1)
\psdot[dotsize=2mm](1.5,1)
\psdot[dotsize=2mm](2,1)
\psdot[dotsize=2mm](2.5,1)
\psdot[dotsize=2mm](3,1)
\psdot[dotsize=2mm](3.5,1)
\psdot[dotsize=2mm](4,1)
\psdot[dotsize=2mm](0.5,5)
\psdot[dotsize=2mm](2.,5)
\psdot[dotsize=2mm](3.5,5)
\psdot[dotsize=2mm](2.0,9)
\psellipse(6.0,9.0)(4,0.6)
\psellipticarc(6.0,5.0)(5.5,0.9){0}{180}
\psellipticarc(6.75,5.0)(4.75,0.75){180}{360}
\psellipticarc(6.0,5.0)(4,0.6){0}{180}
\psellipticarc(6.75,5.0)(3.25,0.5){180}{360}
\psellipticarc(6.0,5.0)(2.5,0.3){0}{180}
\psellipticarc(4.5,5.0)(4,0.5){180}{360}
\psline[linewidth=2.pt](2,9)(0.5,5)
\psline[linewidth=2.pt](2,9)(2,5)
\psline[linewidth=2.pt](2,9)(3.5,5)
\psline[linewidth=2.pt](0,1)(0.5,5)
\psline[linewidth=2.pt](0.5,1)(0.5,5)
\psline[linewidth=2.pt](1,1)(0.5,5)
\psline[linewidth=2.pt](1.5,1)(2,5)
\psline[linewidth=2.pt](2,1)(2,5)
\psline[linewidth=2.pt](2.5,1)(2,5)
\psline[linewidth=2.pt](3,1)(3.5,5)
\psline[linewidth=2.pt](3.5,1)(3.5,5)
\psline[linewidth=2.pt](4,1)(3.5,5)
\psarc[linewidth=2.5pt](0.5,1){0.5}{180}{360}
\psline*(1,1)(1.1,0.7)(0.75,0.75)
\put(1.5,9){$v$}
\put(0.,5){$v_0$}
\put(0,0.2){$g_j$}
\put(-1.2,1.2){$t^m(v_1)$}
\put(0.9,0.3){$v_1$}
\put(12.1,1.){$L_i$}
\put(12.1,5.){$L_{i-1}$}
\put(12.1,9.){$L_j$}
\put(9,7){$t$-orbits}
\put(9,2){part of $t$-orbit}
\psline[linewidth=.5pt]{->}(9.2,7.5)(9,8.5)
\psline[linewidth=.5pt]{->}(9.6,6.8)(10.5,5.6)
\end{pspicture}
\caption{}\label{fig:two}
\end{figure}      
By the definition of $G_{\{v_0\}}$,
$$
g_jt (v_1)=t(v_1),\quad (g_j t)^2(v_1)=t^2 (v_1),\dots\qquad 
(g_j t)^m (v_1)=g_j t^m (v_1)=v_1.
$$ 
Hence, the smallest length of a $(g_j t)$-orbit on $L_i$
is $m<m\cdot b=$the smallest length of a $t$-orbit on $L_i$.
Thus, $g_i t$ and $t$ can not be conjugate and we arrive to
(\ref{eq:gthth}) and the same argument as in the beginning
of the proof of Lemma \ref{lem:key_b}, completes the proof.
\end{proof}

\begin{rk}
By Lemma \ref{lem:phiorbitsinclasses} $R(\phi)<\infty$, if the
number of $\phi$-orbits is finite. But the number of
$\phi$-orbits in $G$
is rather weakly related to the number of $t$-orbits on $\cT$.
For example, for $t=\Id$, this depends on ``how saturated 
$G$ is''.
\end{rk}

Similarly to the proof of Theorem \ref{teo:finordaut}, one
can deduce from Lemma \ref{lem:key_a} the following statement.

\begin{teo}\label{teo:finnumoforb}
If a saturated weakly branch group $G$ is a WST group and each its
non-trivial
outer automorphism has the properties from Lemma \ref{lem:key_a},
then $G$ is an $R_\infty$ group.
\end{teo}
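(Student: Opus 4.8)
The plan is to follow the proof of Theorem~\ref{teo:finordaut} almost verbatim, replacing Lemma~\ref{lem:key_b} by Lemma~\ref{lem:key_a}, and to treat the inner automorphisms, which are deliberately excluded from the hypothesis, by a separate elementary argument. Fix an arbitrary automorphism $\phi$ of $G$; by Theorem~\ref{teo:LavrNekr} it is induced by conjugation by a unique tree automorphism $t$, i.e.\ $\phi(g)=tgt^{-1}$.

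First I would dispose of the case when $\phi$ is inner. Then by Lemma~\ref{lem:ReidInnerShifts}, applied with the identity automorphism, $R(\phi)=R(\Id)$, which is just the number of conjugacy classes of $G$. Since a faithful weakly branch group is infinite and residually finite, it admits finite quotients of unbounded order, and by a classical fact (Landau) a finite group with a bounded number of conjugacy classes has bounded order; as the conjugacy classes of $G$ surject onto those of every quotient, $G$ has infinitely many conjugacy classes, so $R(\phi)=\infty$.

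Now suppose $\phi$ is not inner, i.e.\ it represents a non-trivial class of $\Out(G)$. By hypothesis this class has the properties of Lemma~\ref{lem:key_a}; since $G$ is WST and $\phi(g)=tgt^{-1}$ holds automatically, the remaining content is that $t$ satisfies alternative (b), $\max_i\Orb_i(t)=M<\infty$ (and by Lemma~\ref{lem:ReidInnerShifts} it is harmless to pass to whichever representative of the outer class realizes this, without affecting $R$). Lemma~\ref{lem:key_a} then furnishes a level $j_0$ such that for every $j\ge j_0$ there are $g_j\in\St_j$ and a level $i(j)>j$ with $\{g_j\}_\phi\cap\St_{i(j)}=\varnothing$.

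Finally I would extract infinitely many distinct Reidemeister classes using the nesting of level stabilizers. Set $j_1:=j_0$ and inductively $j_{k+1}:=i(j_k)$, so that $j_1<j_2<\cdots$. For $k<l$ one has $j_l\ge j_{k+1}=i(j_k)$, hence $g_{j_l}\in\St_{j_l}\subseteq\St_{i(j_k)}$, while $\{g_{j_k}\}_\phi\cap\St_{i(j_k)}=\varnothing$; therefore $g_{j_l}\notin\{g_{j_k}\}_\phi$ and the classes $\{g_{j_1}\}_\phi,\{g_{j_2}\}_\phi,\dots$ are pairwise distinct, giving $R(\phi)=\infty$. As $\phi$ was arbitrary, $G$ has the $R_\infty$ property. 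The only genuine obstacle beyond this bookkeeping is the inner case: unlike in Theorem~\ref{teo:finordaut}, where the identity is covered by Lemma~\ref{lem:key_b} with $n=1$, here $t=\Id$ falls under alternative (a), so Lemma~\ref{lem:key_a} simply does not apply to inner automorphisms and the conjugacy-class count is indispensable.
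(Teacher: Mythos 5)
Your proof is correct, and for the substantive case it coincides with the paper's argument: the paper's entire proof is the sentence ``Similarly to the proof of Theorem \ref{teo:finordaut}, one can deduce from Lemma \ref{lem:key_a} the following statement'', which unpacks to exactly your scheme --- reduce via Lemma \ref{lem:ReidInnerShifts} to one representative per outer class, apply Lemma \ref{lem:key_a}, and extract infinitely many pairwise distinct classes; your explicit bookkeeping $j_1:=j_0$, $j_{k+1}:=i(j_k)$, with $g_{j_l}\in\St_{j_l}\subseteq\St_{i(j_k)}$ while $\{g_{j_k}\}_\phi\cap\St_{i(j_k)}=\varnothing$, is precisely what the paper compresses into the word ``inductively'' in the proof of Theorem \ref{teo:finordaut}. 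Where you genuinely diverge is the inner case, which the paper's one-line proof passes over in silence: your argument (conjugacy classes of $G$ surject onto those of every finite quotient, an infinite residually finite group has finite quotients of unbounded order, and Landau's theorem bounds the order of a finite group in terms of its number of conjugacy classes) is correct and self-contained, and you are right that $t=\Id$ falls under alternative (a), so Lemma \ref{lem:key_a} indeed cannot be invoked. However, your closing claim that the conjugacy-class count is \emph{indispensable} overshoots: by Lemma \ref{lem:ReidInnerShifts} the inner case reduces to $R(\Id)$, and $\Id$ is an automorphism of finite order $n=1$ induced by the identity tree automorphism, so Lemma \ref{lem:key_b} applies verbatim (every $t$-orbit has length $1$, and a nontrivial $g_j\in\Rist_{v_0}$ makes $g_jt=g_j$ have an orbit of length $>1$ at the first level where it acts nontrivially, whence $g_j\notin\St_i$ together with all its conjugates); this is presumably the reading intended by ``similarly to Theorem \ref{teo:finordaut}''. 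So the two routes differ only here: yours buys independence from Lemma \ref{lem:key_b} at the cost of importing Landau's theorem, while the paper's stays entirely within its own orbit-length machinery.
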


\begin{ex}
{\rm
We do not expect interesting examples of \emph{groups} here, 
moreover, we need the results of this section mostly as a tool
for proofs with using for some \emph{automorphisms} in the next section
(case b) below).

Nevertheless, Example \ref{ex:allauto} works here too.
}
\end{ex}

\section{The general case}\label{sec:gencase}

\begin{lem}\label{lem:key_prime}
Let $\phi:G\to G$ be an automorphism of a group
$G$ acting level-transitively
on a spherically symmetric tree $\mathcal{T}$, 
with $\phi(g)=t g t^{-1}$. 
Suppose, 
\begin{enumerate}
\item $G$ is finitely generated;
\item $G$ is a WST group;
\item moreover, for an infinite subsequence $\{i_k\}$of the sequence
of levels, arising as transitivity levels in the definition of WST,
the corresponding group $\G_{\{i\}}$ (see Def. \ref{dfn:gammai}) 
is not abelian; 
\item branching numbers are prime (may be distinct for distinct
levels).
\end{enumerate}

Suppose, $R(\phi)<\infty$. Then there exists $j_0$
such that for any $j\ge j_0$ there exists an element
$g_j\in \St_j$ and a number $i>j$ such that $\{g_j\}_\phi\cap
\St_i=\varnothing$.
\end{lem}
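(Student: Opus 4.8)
The plan is to follow the template of Lemmas \ref{lem:key_b} and \ref{lem:key_a}: I reduce the conclusion $\{g_j\}_\phi\cap\St_i=\varnothing$ to the inequality (\ref{eq:gthth}) read at the level $L_i$, which says exactly that the permutations induced on $L_i$ by $g_jt$ and by $t$ are not conjugate under the image $p_i(G)$. A convenient sufficient condition for non-conjugacy is that the two permutations have different cycle type, and the cheapest invariant detecting this is the sign. At the outset I would extract the one consequence of the standing hypothesis $R(\phi)<\infty$ that is \emph{not} available in Lemmas \ref{lem:key_b}--\ref{lem:key_a}: since $G$ is finitely generated and (being a faithful tree group) residually finite, Lemma \ref{lem:jaforfg} makes $\Fix(\phi)=\{g:tg=gt\}$ finite.

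Next I would use hypotheses (2)--(4) to manufacture, at each of the infinitely many non-abelian transitivity levels $i_k$, a permutation of odd sign supported on a single block. By Definition \ref{dfn:gammai} the non-commutativity of $\G_{\{i_k\}}$ forces some factor $H:=p_{i_k}(G_{\{v_0\}})$ to be non-abelian, where $v_0\in L_{i_k-1}$ is the corresponding transitivity vertex; by WST $H$ is a transitive subgroup of $\operatorname{Sym}(D(v_0))$, and since the branching number is the prime $p=|D(v_0)|$, Lemma \ref{lem:galois} applies, so a non-abelian (solvable) transitive subgroup of $S_p$ contains an odd permutation $\gamma$. I then pick $g_j\in G_{\{v_0\}}$ with $g_j|_{D(v_0)}=\gamma$.

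To finish, observe that $g_j$ acts trivially on $L_{i_k}$ outside $D(v_0)$, so $g_j|_{L_{i_k}}$ has sign $\operatorname{sgn}(\gamma)=-1$; multiplicativity of the sign gives $\operatorname{sgn}\big((g_jt)|_{L_{i_k}}\big)=-\operatorname{sgn}\big(t|_{L_{i_k}}\big)$, whence $(g_jt)|_{L_{i_k}}$ and $t|_{L_{i_k}}$ have different cycle type and cannot be $G$-conjugate. This is (\ref{eq:gthth}) with $i=i_k$, so $\{g_j\}_\phi\cap\St_i=\varnothing$. As $g_j\in G_{\{v_0\}}\ss\St_{i_k-1}\ss\St_j$ whenever $i_k-1\ge j$, and the levels $i_k$ are unbounded, choosing $j_0$ to be the first of them delivers the required $g_j$ and $i>j$ for every $j\ge j_0$.

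The delicate point — and the reason $R(\phi)<\infty$ is hypothesised — is the existence of the odd permutation $\gamma$. Transitivity and non-commutativity alone do not place $H$ outside $\ker(\operatorname{sgn})$: transitive non-abelian subgroups such as $A_p$ lie in the alternating group, and Lemma \ref{lem:galois} yields an odd element only once $H$ is known to be solvable. I therefore expect the finiteness of $\Fix(\phi)$ to be precisely what excludes the non-solvable alternative. If for infinitely many $k$ the factor $H$ contained no element changing the cycle type of the first-return permutation $t^{m}|_{D(v_0)}$ (with $m$ the length of the $t$-orbit of $v_0$), then every class $\{g\}_\phi$ with $g\in G_{\{v_0\}}$ would meet the same $\St_i$; transporting these coincidences through Lemma \ref{lem:ReidInnerShifts} should yield infinitely many independent solutions of $tg=gt$, contradicting $|\Fix(\phi)|<\infty$. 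Turning this counting heuristic into a proof, and confirming that the sign argument may be replaced by a bare cycle-type change exactly in the non-solvable case, is where I anticipate the main work.
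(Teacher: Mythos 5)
You have correctly reconstructed the skeleton of the paper's argument --- reduction of $\{g_j\}_\phi\cap\St_i=\varnothing$ to non-conjugacy of $g_jt$ and $t$ at the level $L_i$, detection by the sign, an odd permutation supported over a single transitivity vertex $v_0$ via Lemma \ref{lem:galois}, and evenness of $h^{-1}tht^{-1}$ --- and you have also correctly located the crux: Lemma \ref{lem:galois} requires solvability of $H=p_i(G_{\{v_0\}})$, which transitivity plus non-commutativity do not provide (e.g.\ $A_5\le S_5$). But at exactly that point your proposal stops being a proof. The paper closes the gap as follows: by Lemma \ref{lem:jaforfg} the number of $\phi$-fixed elements in the finite quotients $G/\St_i$ is bounded by a function of $r=R(\phi)$ alone; in the case $\Orb_i(t)\to\infty$ one may therefore choose the transitivity level $i$ so large that this number is strictly smaller than $\Orb_{i-1}(t)$, and a pigeonhole over the $t$-orbits of $L_{i-1}$ produces a vertex $v_0$ such that, for every $v$ in its $t$-orbit, $p_i(G_{\{v\}})$ contains no nontrivial $\phi$-fixed point. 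Since conjugation by $t$ permutes these groups cyclically, $p_iG_{\{t^m(v_0)\}}=t^m\,p_iG_{\{v_0\}}\,t^{-m}$, their commuting product $\G$ is $\phi$-invariant, and $\phi$ acts on the finite group $\G$ fixed-point-freely; by Rowley's theorem \cite{Rowley1995} $\G$ is then solvable, hence so is its subgroup $H$, and only now does Lemma \ref{lem:galois} apply (the abelian alternative being excluded by hypothesis (3)). Your ``counting heuristic'' --- transporting coincidences of Reidemeister classes through Lemma \ref{lem:ReidInnerShifts} to manufacture infinitely many solutions of $tg=gt$ --- is left entirely undeveloped; in particular it is unclear how finitely many classes would yield infinitely many \emph{independent} fixed points, and the paper does not argue this way. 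You should also note that fixed-point-freeness must be verified on a $\phi$-invariant group, which is the product $\G$ over the whole $t$-orbit of $v_0$ and not the single factor $H$, since $t$ moves $G_{\{v_0\}}$ to $G_{\{t(v_0)\}}$; your formulation in terms of the first-return map $t^m|_{D(v_0)}$ alone cannot be the whole story.

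A second omission: the pigeonhole above is available only when $\Orb_i(t)\to\infty$. The paper explicitly splits into the two cases listed after Definition \ref{dfn:gammai}: when $\Orb_i(t)$ is bounded, no level has more $t$-orbits than the fixed-point bound, the sign/solvability route is abandoned, and the statement follows instead from Lemma \ref{lem:key_a} (available since $G$ is WST), where the invariant separating $g_jt$ from $t$ on $L_i$ is the minimal orbit length rather than the sign. Your proposal never addresses the bounded case, so even granting your heuristic it would cover only half of the statement.
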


\begin{proof}
As in the proof of Lemma \ref{lem:key_b},
it is sufficient to find an element $g_j\in\St_j$ such that
 at the level $L_i$ for any $h\in G$
\begin{equation}\label{eq:gthth1}
g_j  \ne h^{-1} t h t^{-1}. 
\end{equation}

Consider two cases:
\begin{itemize}
\item[a)] $\Orb_i(t)\to \infty$;
\item[b)] $\Orb_i(t)$ is bounded.
\end{itemize}

\textbf{Case a).} Since $R(\phi)<\infty$ and $G$ is finitely generated, by Lemma \ref{lem:jaforfg} the number of $\phi$-fixed elements 
for the quotient $G/St_{i}$ is
strictly less $\Orb_{i-1}(t)$ at each level $i$
greater some $j_0$. 

Now for any $j>j_0$, let $i-1>j$ be the  number of a level 
with transitive action
of $G_{\{v\}}$ for any $v\in L_{i-1}$ (see Remark \ref{rk:Gvforleveltrans}) such that $\G_{\{i\}}$ is not abelian.

Each of the above-mentioned $\phi$-fixed elements
(except of $e$)
acts non-trivially at some vertex $w_s$. Thus an element, which
fixes these vertexes, is not $\phi$-fixed. Hence there exists $v_0\in L_{i-1}$
such that for any $v$ in its $t$-orbit,
$p_{i}(G_{\{v\}})$ does not contain $\phi$-fixed points,
where $p_{i}:G \to  G/St_{i}$. 
Suppose, the $t$-orbit of $v_0$ has some length $k$ ($k=1$ can
occur in particular). 
Then 
$$
p_{i}G_{\{t^m(v_0)\}}=t^m 
p_{i} G_{\{v_0\}} t^{-m}, \qquad
m=0,1,\dots,k-1.
$$ 
Evidently, elements of these groups commute, and 
we can form a group 
$$
\G:=p_{i}(G_{\{v_0\}})\cdots
p_{i}G_{\{t^{k-1}(v_0)\}}
$$ 
with an action of $\phi$.
Each $\g\in \G$ acts trivially on all $w_s$. Hence, $\G$ has
no nontrivial $\phi$-fixed elements.
So, $\G$
is a subgroup with a fixed-point-free automorphism $\phi$.
Then it is solvable by \cite{Rowley1995}.

Hence, its subgroup $p_i(G_{\{v_0\}})$ is also solvable.
It is a transitive subgroup of the symmetric group $S_p$,
where $p$ is the prime branching number for vertexes from $L_{i-1}$.  
Then, by Lemma \ref{lem:galois}, it is either abelian, or contains
an odd permutation $p_i(g_j)\not\in A_p$, $g_j\in G_{\{v_0\}}$. 
In the first case, $\G_{\{i\}}$ is abelian in a contradiction 
with the
supposition. In the second case, $g_j$ is trivial on $L_i$
except the successors of $v_0$. So it is an odd permutation
on the entire $L_i$, while $h^{-1} t h t^{-1}$ is an even one. This gives (\ref{eq:gthth1}).

\textbf{Case b).} This case immediately follows from
Lemma \ref{lem:key_a}. 
\end{proof}

Similarly to the proof of Theorem \ref{teo:finordaut} we obtain
from Lemma \ref{lem:key_prime} the following statement.

\begin{teo}\label{teo:primecase}
Suppose, $G$ is a finitely generated saturated weakly branch WST 
group on a spherically symmetric tree with prime branching numbers 
and an infinite sequence of non-abelian $\G_{\{i\}}$ (i.e. satisfying
the suppositions of Lemma \ref{lem:key_prime}). Then $G$ is an
$R_\infty$ group.
\end{teo}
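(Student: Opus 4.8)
The plan is to reduce Theorem~\ref{teo:primecase} to Lemma~\ref{lem:key_prime} in exactly the same way that Theorem~\ref{teo:finordaut} was deduced from Lemma~\ref{lem:key_b}. Suppose, for contradiction, that $R(\phi)<\infty$ for some automorphism $\phi$ of $G$. By Lemma~\ref{lem:ReidInnerShifts} the Reidemeister number is invariant under composition with inner automorphisms, so I may adjust $\phi$ by an inner automorphism without changing $R(\phi)$; and by Theorem~\ref{teo:LavrNekr} any automorphism $\phi$ of a saturated weakly branch group is induced by conjugation by an isometry $t$ of the tree, i.e.\ $\phi(g)=tgt^{-1}$. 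The hypotheses of the theorem are precisely conditions (1)--(4) of Lemma~\ref{lem:key_prime}, together with the standing assumption $R(\phi)<\infty$ that opens that lemma. Note that the group being saturated gives level-transitivity (via the characteristic subgroups $H_n$), which is the ambient hypothesis of Lemma~\ref{lem:key_prime}.

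Granting all this, Lemma~\ref{lem:key_prime} produces a level $j_0$ such that for every $j\ge j_0$ there is an element $g_j\in\St_j$ and an index $i>j$ with $\{g_j\}_\phi\cap\St_i=\varnothing$. The point is now to iterate this output to manufacture infinitely many distinct Reidemeister classes. Starting from $j_1:=j_0$, I obtain $g_{j_1}$ and $i_1>j_1$; then I apply the lemma again with $j_2:=i_1$ to obtain $g_{j_2}\in\St_{j_2}=\St_{i_1}$ and $i_2>i_1$; continuing, I build a strictly increasing sequence of levels $j_1<i_1=j_2<i_2=j_3<\cdots$ and elements $g_{j_k}\in\St_{j_k}$.

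The key observation that makes these representatives pairwise non-equivalent is that $g_{j_{k}}\in\St_{j_k}=\St_{i_{k-1}}$ for $k\ge 2$, whereas the class $\{g_{j_{k-1}}\}_\phi$ is disjoint from $\St_{i_{k-1}}$. Hence $g_{j_k}$ cannot lie in $\{g_{j_{k-1}}\}_\phi$; more generally, since the levels $i_k$ are increasing and each $g_{j_m}$ for $m\ge k$ sits in $\St_{j_m}\subseteq\St_{i_{k-1}}$ while $\{g_{j_{k-1}}\}_\phi\cap\St_{i_{k-1}}=\varnothing$, no later representative can be twisted-conjugate to an earlier one. This yields an infinite family of pairwise distinct Reidemeister classes, so $R(\phi)=\infty$, contradicting the assumption $R(\phi)<\infty$. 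Therefore no automorphism has finite Reidemeister number, and $G$ has the property $R_\infty$.

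I expect the substantive content to reside entirely in Lemma~\ref{lem:key_prime}, which is already established; the only mild care needed at this stage is the bookkeeping of the nested stabilizers $\St_i$ to confirm that a representative living deep in the tree (high level) cannot be twisted-conjugate to a shallower one whose class avoids that depth. The main obstacle is thus not in this theorem's proof but lies upstream, in verifying that the solvability input of Rowley and the Galois-theoretic dichotomy of Lemma~\ref{lem:galois} combine to force the odd-permutation alternative; here I simply invoke Lemma~\ref{lem:key_prime} as a black box.
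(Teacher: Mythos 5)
Your proposal is correct and follows the paper's own route exactly: the paper proves this theorem by the one-line reduction ``similarly to the proof of Theorem~\ref{teo:finordaut}'', i.e.\ assume $R(\phi)<\infty$, use Theorem~\ref{teo:LavrNekr} to write $\phi(g)=tgt^{-1}$, and then iterate Lemma~\ref{lem:key_prime} to produce an infinite sequence of representatives of distinct Reidemeister classes --- precisely the nested-stabilizer bookkeeping $g_{j_m}\in\St_{j_m}\subseteq\St_{i_k}$ versus $\{g_{j_k}\}_\phi\cap\St_{i_k}=\varnothing$ that you spell out. (Your parenthetical that saturation \emph{implies} level-transitivity is not quite what the definition gives, but this is harmless since level-transitivity is part of the suppositions of Lemma~\ref{lem:key_prime}, which the theorem assumes explicitly.)
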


\begin{rk}
{\rm
Reasonable examples will be given in the next
section for a version of this statement, namely Theorem \ref{teo:gengen}.
}
\end{rk}

\section{Some generalizations}

Evidently the above statements can be easily extended to
some more general cases (with more complicated formulations).

For example, Theorem \ref{teo:finordaut}
can be evidently generalized in the following way.
\begin{teo}\label{teo:finordautgen}
Suppose, $G$ is a weakly branch group
and each automorphism from $\Out (G)$ is of finite order
and defined by an automorphism of the tree.
Then $G$ has the $R_\infty$ property.
\end{teo}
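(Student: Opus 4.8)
The plan is to run the proof of Theorem~\ref{teo:finordaut} essentially unchanged, after isolating the single point where saturation entered. There, saturation served only to invoke Theorem~\ref{teo:LavrNekr}, and from that theorem the argument really used just two things: that every automorphism has the form $g\mapsto tgt^{-1}$ for a tree automorphism $t$, and that the centralizer of $G$ in $Iso(\cT)$ is trivial. The first is now granted directly by the hypothesis. The second I would supply for \emph{arbitrary} weakly branch groups: if $c\in Iso(\cT)$ centralized $G$ and $c\ne e$, take a vertex $v$ of least level with $c(v)=v'\ne v$, so that $v$ and $v'$ are siblings and the subtrees $\cT_v$, $\cT_{v'}$ are disjoint; then a nontrivial $g\in\Rist_v$ (available by weak branching) would have $\supp(cgc^{-1})\ss\cT_{v'}$ while $\supp(g)\ss\cT_v$, contradicting $cgc^{-1}=g$. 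This triviality of the centralizer is exactly what turns ``$\phi$ has finite order $n$'' into ``$t$ has finite order $n$'' inside Lemma~\ref{lem:key_b}, and it is why that lemma is already correct for weakly branch groups with no saturation assumed.

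With these two facts available, I would argue as follows. By Lemma~\ref{lem:ReidInnerShifts}, together with the identity $\phi\circ\t_g=\t_{\phi(g)}\circ\phi$, the number $R(\phi)$ is unchanged under composing $\phi$ with an inner automorphism on either side; hence $R$ descends to a function on $\Out(G)$ and it suffices to verify $R(\phi)=\infty$ on one representative of each outer class. Fixing such a class, the hypothesis says it has finite order and is induced by a tree automorphism, so I choose the representative in the form $\phi(g)=tgt^{-1}$ with $\phi$ of finite order $n$.

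Then Lemma~\ref{lem:key_b}, whose hypotheses (weakly branch $G$; a finite-order automorphism of tree-conjugation form) are now met, applies verbatim and produces a level $j_0$ such that for every $j\ge j_0$ there are $g_j\in\St_j$ and $i>j$ with $\{g_j\}_\phi\cap\St_i=\varnothing$. Feeding back $j$'s chosen beyond the previous $i$'s yields inductively an infinite family of elements in pairwise distinct Reidemeister classes, so $R(\phi)=\infty$. Every outer class is treated this way --- the trivial one included, with $t=e$ --- so $G$ has the property $R_\infty$.

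The step I expect to be most delicate is the choice, for a given finite-order outer class, of a representative that is at once of finite order and of the form $g\mapsto tgt^{-1}$ with $t$ of bounded orbit length: finiteness of the order in $\Out(G)$ by itself only yields $t^{n}\in G$, and Lemma~\ref{lem:key_b} genuinely needs the orbit lengths of $t$ to be bounded. Securing a finite-order tree automorphism $t$ in the coset is precisely what the added clause ``of finite order and defined by an automorphism of the tree'' is there to guarantee, taking over the role that Theorem~\ref{teo:LavrNekr} played under saturation.
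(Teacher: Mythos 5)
Your proposal is correct and follows exactly the route the paper intends for Theorem \ref{teo:finordautgen}, which the paper states without proof as an ``evident'' generalization: rerun the proof of Theorem \ref{teo:finordaut} with the tree automorphism $t$ supplied by hypothesis rather than by Theorem \ref{teo:LavrNekr}. Your one substantive addition --- the sibling-subtree argument showing the centralizer of $G$ in $Iso(\cT)$ is trivial for an arbitrary weakly branch group, which under saturation came only from Theorem \ref{teo:LavrNekr} --- is correct and is precisely the tacit point Lemma \ref{lem:key_b} needs in order to turn ``$\phi$ has finite order $n$'' into ``$t$ has finite order $n$'', so making it explicit is a genuine improvement on the paper's one-line treatment.
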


Now we will give another version of Theorem \ref{teo:primecase}.

\begin{teo}\label{teo:gengen}
Suppose, $G$ is a saturated weakly branch
group on a spherically symmetric tree, such that 
for any $j$, $\St_j$ contains an element $g_j$ defining
an odd permutation at some level $j_0>j$.
Then $G$ is an $R_\infty$ group.
\end{teo}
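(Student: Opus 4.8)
The plan is to reduce Theorem~\ref{teo:gengen} to the same mechanism that powers the previous results: produce, for each level $j$, a Reidemeister class representative lying in $\St_j$ but escaping some deeper $\St_i$, so that these representatives exhaust infinitely many distinct classes and force $R(\phi)=\infty$. By Lemma~\ref{lem:ReidInnerShifts} and Theorem~\ref{teo:LavrNekr}, it suffices to treat an arbitrary automorphism $\phi(g)=tgt^{-1}$ induced by a tree isometry $t$, and it is enough to exhibit, for each $j\ge j_0$, an element $g_j\in\St_j$ together with a level $i>j$ such that $\{g_j\}_\phi\cap\St_i=\varnothing$. As in the proof of Lemma~\ref{lem:key_prime}, this reduces to finding $g_j$ with
\begin{equation*}
g_j\ne h^{-1}\,t\,h\,t^{-1}\quad\text{at the level }L_i\text{ for all }h\in G.
\end{equation*}

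First I would use the hypothesis directly: for the given $j$, take the element $g_j\in\St_j$ that induces an odd permutation at some level $j_0>j$, and set $i:=j_0$. The key observation is a parity argument. Since $t$ is an isometry of the spherically symmetric tree, the permutation it induces on $L_i$ is some $\pi_t\in\Sym(L_i)$, and conjugation $h^{-1}th t^{-1}$ induces a commutator of permutations, $\pi_h^{-1}\pi_t\pi_h\pi_t^{-1}$, which is always an \emph{even} permutation of $L_i$ (a commutator lies in the alternating group). On the other hand $g_j$ was chosen to act as an \emph{odd} permutation at $L_i=L_{j_0}$. An even permutation can never equal an odd one, so equation~(\ref{eq:gthth1}) holds and hence $\{g_j\}_\phi\cap\St_i=\varnothing$.

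The remaining step is to turn this per-level construction into infinitely many distinct Reidemeister classes. Here I would run exactly the inductive bookkeeping used at the end of the proof of Lemma~\ref{lem:key_b} and in Theorem~\ref{teo:finordaut}: start at some $j_1$, obtain $g_{j_1}\in\St_{j_1}$ and a level $i_1>j_1$ with $\{g_{j_1}\}_\phi\cap\St_{i_1}=\varnothing$; then set $j_2:=i_1$ and repeat, producing $g_{j_2}\in\St_{j_2}=\St_{i_1}$ and $i_2>i_1$, and so on. Because $g_{j_2}\in\St_{i_1}$ while $g_{j_1}$ represents a class disjoint from $\St_{i_1}$, the elements $g_{j_1}$ and $g_{j_2}$ lie in different Reidemeister classes; iterating gives an infinite sequence of pairwise distinct classes and therefore $R(\phi)=\infty$.

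I expect the only genuine subtlety to be the parity argument itself, specifically verifying that $h^{-1}tht^{-1}$ really induces an even permutation on $L_i$. This is immediate once one notes that the sign homomorphism $\Sym(L_i)\to\{\pm1\}$ kills every commutator, and that $h\mapsto\pi_h$ is a homomorphism into $\Sym(L_i)$; thus $\operatorname{sgn}(\pi_h^{-1}\pi_t\pi_h\pi_t^{-1})=1$ automatically, with no hypothesis on $t$ needed beyond its being an isometry of $\cT$. The hypothesis that \emph{every} $\St_j$ contains such an odd element (rather than only the case-a/case-b dichotomy of Lemma~\ref{lem:key_prime}) is precisely what lets us bypass the finiteness-of-$R(\phi)$ assumption and the solvability/Galois-theoretic machinery, giving a clean unconditional $R_\infty$ conclusion.
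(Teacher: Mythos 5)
Your proposal is correct and is essentially the paper's own argument: the paper proves Theorem~\ref{teo:gengen} by exactly this parity observation, namely that $h^{-1}tht^{-1}$ acts as an even permutation on the finite level $L_{j_0}$ while $g_j$ acts as an odd one, so (\ref{eq:gthth1}) holds, and then concludes via the same reduction (Theorem~\ref{teo:LavrNekr}, Lemma~\ref{lem:ReidInnerShifts}) and the inductive bookkeeping of Lemma~\ref{lem:key_b}/Theorem~\ref{teo:finordaut}. You have merely made explicit the sign-homomorphism justification and the iteration producing infinitely many distinct classes, both of which the paper leaves implicit.
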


\begin{proof}
Indeed, (\ref{eq:gthth1}) keeps, because $h^{-1} t h t^{-1}$
is an even permutation and $g_j$ is an odd permutation at
the level $j_0$.
\end{proof}

\begin{ex}
{\rm
The full isometry group as in Example \ref{ex:allauto} satisfies
the conditions of Theorem \ref{teo:gengen}.
}
\end{ex}

\begin{ex}
{\rm
Consider a saturated weakly branch group $G$ and consider
a group $\G$ generated by $G$ and an infinite series
of isometries $g_j$,
e.g., transpositions of two neighbouring elements at level 
$L_{j+1}$ and somehow defined at their successors.
Then $\G$ satisfies the conditions of Theorem \ref{teo:gengen}.
}
\end{ex}


\begin{thebibliography}{10}

\bibitem{bfg}
{\sc Collin Bleak, Alexander Fel'shtyn, and Daciberg~L. Gon{\c{c}}alves}.
\newblock Twisted conjugacy classes in {R}. {T}hompson's group {$F$}.
\newblock {\em Pacific J. Math.} {\bf 238},  No.~1, 1--6, 2008.

\bibitem{DekimpeGoncalves2014BLMS}
{\sc Karel Dekimpe and Daciberg Gon{\c{c}}alves}.
\newblock The {$R_\infty$} property for free groups, free nilpotent groups and
  free solvable groups.
\newblock {\em Bull. Lond. Math. Soc.} {\bf 46},  No.~4, 737--746, 2014.

\bibitem{DixonMortimer}
{\sc John~D. Dixon and Brian Mortimer}.
\newblock {\em Permutation groups}, volume 163 of {\em Graduate Texts in
  Mathematics}.
\newblock Springer-Verlag, New York, 1996.

\bibitem{FelshB}
{\sc Alexander Fel'shtyn}.
\newblock Dynamical zeta functions, {N}ielsen theory and {R}eidemeister
  torsion.
\newblock {\em Mem. Amer. Math. Soc.} {\bf 147},  No.~699, xii+146, 2000.

\bibitem{FelHill}
{\sc Alexander Fel'shtyn and Richard Hill}.
\newblock The {R}eidemeister zeta function with applications to {N}ielsen
  theory and a connection with {R}eidemeister torsion.
\newblock {\em $K$-Theory} {\bf 8},  No.~4, 367--393, 1994.

\bibitem{FelLeonTro}
{\sc Alexander Fel'shtyn, Yurii Leonov, and Evgenij Troitsky}.
\newblock Twisted conjugacy classes in saturated weakly branch groups.
\newblock {\em Geometriae Dedicata} {\bf 134}, 61--73, 2008.

\bibitem{FelLuchTro}
{\sc Alexander Fel'shtyn, Nikita Luchnikov, and Evgenij Troitsky}.
\newblock Twisted inner representations.
\newblock {\em Russian Journal of Mathematical Physics} {\bf 22},  No.~3,
  301--306, 2015.

\bibitem{FelNasy2016JGT}
{\sc Alexander Fel'shtyn and Timur Nasybullov}.
\newblock The ${R}_{\infty}$ and ${S}_{\infty}$ properties for linear algebraic
  groups.
\newblock {\em J. Group Theory} {\bf 19},  No.~5, 901--921, 2016.

\bibitem{polyc}
{\sc Alexander Fel'shtyn and Evgenij Troitsky}.
\newblock Twisted {B}urnside-{F}robenius theory for discrete groups.
\newblock {\em J. Reine Angew. Math.} {\bf 613}, 193--210, 2007.

\bibitem{FelshtynTroitskyFaces2015JGT}
{\sc Alexander Fel'shtyn and Evgenij Troitsky}.
\newblock Aspects of the property ${R}_{\infty}$.
\newblock {\em J. Group Theory} {\bf 18},  No.~6, 1021--1034, 2015.

\bibitem{FelTroVer}
{\sc Alexander Fel'shtyn, Evgenij Troitsky, and Anatoly Vershik}.
\newblock Twisted {B}urnside theorem for type {II}${}_1$ groups: an example.
\newblock {\em Math. Res. Lett.} {\bf 13},  No.~5, 719--728, 2006.

\bibitem{gowon1}
{\sc Daciberg Gon{\c{c}}alves and Peter Wong}.
\newblock Twisted conjugacy classes in wreath products.
\newblock {\em Internat. J. Algebra Comput.} {\bf 16},  No.~5, 875--886, 2006.

\bibitem{GoWon09Crelle}
{\sc Daciberg Gon{\c{c}}alves and Peter Wong}.
\newblock Twisted conjugacy classes in nilpotent groups.
\newblock {\em J. Reine Angew. Math.} {\bf 633}, 11--27, 2009.

\bibitem{GrFA}
{\sc Rostislav~I. Grigorchuk}.
\newblock On {B}urnside's problem on periodic groups.
\newblock {\em Funct. Anal. Appl.} {\bf 14}, 41--43, 1980.

\bibitem{GrigSidki}
{\sc Rostislav~I. Grigorchuk and Said~N. Sidki}.
\newblock The group of automorphisms of a 3-generated 2-group of intermediate
  growth.
\newblock {\em Internat. J. Algebra Comput.} {\bf 14},  No.~5-6, 667--676,
  2004.
\newblock International Conference on Semigroups and Groups in honor of the
  65th birthday of Prof. John Rhodes.

\bibitem{GuSi}
{\sc Narain Gupta and Said Sidki}.
\newblock On the {B}urnside problem for periodic groups.
\newblock {\em Math. Z.} {\bf 182}, 385--388, 1983.

\bibitem{Jabara}
{\sc Enrico Jabara}.
\newblock Automorphisms with finite {R}eidemeister number in residually finite
  groups.
\newblock {\em J. Algebra} {\bf 320},  No.~10, 3671--3679, 2008.

\bibitem{LavrNekr}
{\sc Yaroslav Lavreniuk and Volodymyr Nekrashevych}.
\newblock Rigidity of branch groups acting on rooted trees.
\newblock {\em Geom. Dedicata} {\bf 89}, 159--179, 2002.

\bibitem{MubeenaSankaran2014Canad}
{\sc T.~Mubeena and P.~Sankaran}.
\newblock Twisted {C}onjugacy {C}lasses in {A}belian {E}xtensions of {C}ertain
  {L}inear {G}roups.
\newblock {\em Canad. Math. Bull.} {\bf 57},  No.~1, 132--140, 2014.

\bibitem{Nasybull2012}
{\sc Timur~R. Nasybullov}.
\newblock Twisted conjugacy classes in general and special linear groups.
\newblock {\em Algebra Logika} {\bf 51},  No.~3, 331--346, 415, 418, 2012.

\bibitem{Romankov}
{\sc Vitaly Roman'kov}.
\newblock Twisted conjugacy classes in nilpotent groups.
\newblock {\em J. Pure Appl. Algebra} {\bf 215},  No.~4, 664--671, 2011.

\bibitem{Rowley1995}
{\sc Peter Rowley}.
\newblock Finite groups admitting a fixed-point-free automorphism group.
\newblock {\em J. Algebra} {\bf 174},  No.~2, 724--727, 1995.

\bibitem{SidkiJA2}
{\sc Said Sidki}.
\newblock On a {$2$}-generated infinite {$3$}-group: subgroups and
  automorphisms.
\newblock {\em J. Algebra} {\bf 110},  No.~1, 24--55, 1987.

\bibitem{TabWong}
{\sc Jennifer Taback and Peter Wong}.
\newblock Twisted conjugacy and quasi-isometry invariance for generalized
  solvable {B}aumslag-{S}olitar groups.
\newblock {\em J. Lond. Math. Soc. (2)} {\bf 75},  No.~3, 705--717, 2007.

\bibitem{ncrmkwb}
{\sc Evgenij Troitsky}.
\newblock Noncommutative {R}iesz theorem and weak {B}urnside type theorem on
  twisted conjugacy.
\newblock {\em Funct. Anal. Pril.} {\bf 40},  No.~2, 44--54, 2006.
\newblock In Russian, English translation: \emph{Funct. Anal. Appl.}
  \textbf{40} (2006), No. 2, 117--125.

\bibitem{TroTwoEx}
{\sc Evgenij Troitsky}.
\newblock Two examples related to the twisted {B}urnside-{F}robenius theory for
  infinitely generated groups.
\newblock {\em Fundam. Appl. Math.} {\bf 21},  No.~5, 231--239, 2016.
\newblock (in Russian, English translation in Journal of Mathematical Sciences
  (New York) to appear, preprint version
  \texttt{http://www.mpim-bonn.mpg.de/preblob/5689}).

\bibitem{TroLamp}
{\sc Evgenij Troitsky}.
\newblock Reidemeister classes in lamplighter type groups.
\newblock e-print, 2017.
\newblock arXiv:1711.09371.

\end{thebibliography}
\end{document}